\documentclass[12pt]{article}
\usepackage{times}
\usepackage{amsmath,amssymb}
\usepackage{theoremref}
\usepackage[utf8]{inputenc}
\usepackage{indentfirst}
\usepackage{graphicx,tikz}
\usepackage{xcolor}
\usepackage{xskak}
\usepackage[T1]{fontenc}
\usepackage{amsthm}
\usepackage{indentfirst}

\usepackage{parskip}
\newtheorem{problem}{Question}
\newtheorem{definition}[problem]{Definition}
\newtheorem{theorem}[problem]{Theorem}

\title{Inverting Dependently Random Tournaments}
\author{Kada Williams}

\begin{document}

\maketitle

\begin{abstract}
Let $\text{inv}(T)$ be the inversion index of the tournament $T$. Answering a question of Behague, we show that $\text{inv}(T_1\to T_2)=\text{inv}(T_1)+\text{inv}(T_2)$ does not even hold asymptotically.
\end{abstract}

\pagenumbering{arabic}
\setlength{\baselineskip}{18pt}

Consider $n$ vertices, labelled by $[n]=\{1,2,\ldots,n\}$, and for every unordered pair $\{i,j\}\subset [n]$, an arrow $(i,j)$ from $i$ to $j$ or an arrow $(j,i)$ from $j$ to $i$. Such a vertex set and set of ordered pairs is known as a tournament \cite{BBB}. If the relation defined by the set of arrows is transitive, it is a transitive tournament.

A fundamental result in the study of tournaments is that a Hamiltonian directed circuit is to be found in a tournament if and only if it is not a dijoin \cite{Gho}.

\begin{definition}
    Let $T_1$ and $T_2$ be tournaments on $n_1$ and $n_2$ vertices. Their join $T_1\to T_2$ is a tournament on $n_1+n_2$ vertices, corresponding to the vertices of $T_1$ and of $T_2$, with arrows as in $T_1$ or $T_2$ and arrows from $T_1$-vertices to $T_2$-vertices.
\end{definition}

For a subset $X\subseteq [n]$, replacing the arrow $(i,j)$ with $(j,i)$ whenever $\{i,j\}\subset X$ is called an $X$-inversion. To see an example, if $X_1$ and $X_2$ are the vertex sets of the $T_1$ and $T_2$ in their join $T_1\to T_2$, inverting $X_1$, $X_2$, and $X_1\cup X_2$ yields $T_2\to T_1$.

\begin{definition}
   The least number of inversions required to reach a transitive tournament from the tournament $T$ is the inversion number $\text{inv}(T)$ of $T$.
\end{definition}

Similarly to the inversion of graphs, where non-edges are interchanged with edges \cite{BPR}, tournament inversion is intricately connected with linear algebra over $\mathbb{F}_2$ \cite{PKT}.

We biject a sequence of vertex sets $X_1,\ldots,X_k$ with an array of $n$ vectors $(\mathbf{v}_i)_{i=1}^n$, where the $t$-th component of $\mathbf{v}_i\in\mathbb{F}_2^k$ is $1$ if $i\in X_t$ and else $0$. The arrow of $\{i,j\}$ is reversed by $X_t$-inversion if and only if $(\mathbf{v}_i)_t(\mathbf{v}_j)_t=1$, so the formal dot product $\mathbf{v}_i\cdot \mathbf{v}_j=\sum_{t=1}^k(\mathbf{v}_i)_t(\mathbf{v}_j)_t$ indicates whether $(i,j)$ is reversed by these $k$ inversions.

For each $n$, is there a tournament with inversion number $n$? The answer is yes: the $n$-fold join of the $3$-cycle $C_3$. One reason is that the $\mathbb{F}_2$-vectors $\mathbf{v}_i$ are linearly independent \cite{PKT}. Such a non-elementary argument raised the question of whether the inversion number of a join is necessarily the sum of the individual inversion numbers \cite{BHS}. As it turns out, $\text{inv}(T\to C_3)=\text{inv}(T)+1$ if $\text{inv}(T)$ is even \cite{LWY}, but for every odd $k\ge 3$, there is a tournament $T_k$ with $\text{inv}(T_k)=k$ such that for all tournaments $T'$, $\text{inv}(T_k\to T')<k+\text{inv}(T')$ unless $T'$ is transitive \cite{AHH}.

In light of the trivial bounds
$$\max\{\text{inv}(T_1),\text{inv}(T_2)\}\le \text{inv}(T_1\to T_2)\le \text{inv}(T_1)+\text{inv}(T_2),$$
one could inquire into which possible values of $\text{inv}(T_1\to T_2)$ can be achieved, depending on $(\text{inv}(T_1),\text{inv}(T_2))$. By inspecting matrix rank, the lower bound cannot be attained if $\text{inv}(T_1),\text{inv}(T_2)\ge 2$ or $\text{inv}(T_1)$ is even and $\text{inv}(T_2)=1$ \cite{BJM}, \cite{BeG}. While the joins of $C_3$ are a case of equality for the upper bound, it has not been known whether $\text{inv}(T_1\to T_2)=\text{inv}(T_1)+\text{inv}(T_2)-O(1)$ \cite{Cam}. We disprove this.

\begin{theorem} \label{main}
Let $n$ be a positive integer. Then there exist tournaments $T_1$ and $T_2$ such that $\text{inv}(T_1\to T_2)\le n$, yet $\text{inv}(T_m)=n-o(n)$ for $m=1,2$.
\end{theorem}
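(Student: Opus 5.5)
The plan is to build $T_1\to T_2$ directly as an inversion of a transitive tournament, using the vector description from the introduction. We fix a linear order $\sigma$ on the vertex set $A\cup B$, with $A$ the future vertices of $T_1$ and $B$ those of $T_2$. To each vertex we attach a vector in $\mathbb{F}_2^n$: $\mathbf{x}_a$ for $a\in A$ and $\mathbf{y}_b$ for $b\in B$. The resulting tournament is the transitive tournament of $\sigma$ with $(i,j)$ reversed exactly when $\mathbf{v}_i\cdot\mathbf{v}_j=1$. It has inversion number at most $n$ by construction. It is a join $T_1\to T_2$ precisely when, for all $a\in A$ and $b\in B$,
$$\mathbf{x}_a\cdot\mathbf{y}_b=[\,\sigma(b)<\sigma(a)\,].$$
The point is to interleave $A$ and $B$ in $\sigma$. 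Interleaving makes the cross constraint a staircase rather than identically zero, and this avoids the orthogonality obstruction $\dim\langle\mathbf{x}\rangle+\dim\langle\mathbf{y}\rangle\le n$ that a non-interleaved order would impose.

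\textbf{Construction.} Let $s=s(n)=o(n)$ with $s\to\infty$ (say $s=\lfloor\sqrt n\rfloor$), and put $m=n-s$. The order $\sigma$ consists of consecutive blocks $B_1,A_1,B_2,A_2,\ldots,B_m,A_m$, each block of size $L$ with $L$ large (polynomial in $n$). Vertices in $A_i$ receive vectors $\mathbf{x}=\mathbf{e}_i+\mathbf{r}$. Here $\mathbf{e}_i$ lives in a fixed $m$-dimensional coordinate part, and $\mathbf{r}$ is chosen uniformly at random from a random $s$-dimensional subspace $R$. For vertices in $B_j$ we set $\mathbf{y}=\mathbf{c}_j+\mathbf{w}$, where $\mathbf{c}_j$ is chosen so that $(\mathbf{e}_i+\mathbf{r})\cdot\mathbf{c}_j=[\,j\le i\,]$, and $\mathbf{w}$ ranges over vectors orthogonal to all $\mathbf{x}$'s. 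To make room for this, I would embed the $\mathbf{e}_i$-part and the $\mathbf{c}_j$-part using a hyperbolic (isotropic) pairing. Over $\mathbb{F}_2$ one can find $m$-dimensional spaces $E,C$ that are each totally isotropic with the dot product pairing $E\times C$ nondegenerately; this costs $2m$ coordinates, so I would instead work with $n$ inversions and $m=n/2-s$ on each side. That looks wasteful, and the real content is to avoid it, see below. The first step is therefore to verify that the cross relations hold exactly. They follow from the staircase identity $\mathbf{e}_i\cdot\mathbf{c}_j=[\,j\le i\,]$ together with the orthogonality of the random parts.

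\textbf{Lower bound.} We must show $\text{inv}(T_1)\ge n-o(n)$, and similarly for $T_2$. I would use two ingredients. The first is the rank principle behind \cite{PKT} and \cite{BJM}: if $T$ is reached from some transitive order by $k$ inversions, then the $\mathbb{F}_2$-matrix recording which pairs disagree with that order has rank at most $k$. The second is a counting/union bound over the at most $(|A|)!\,2^{k|A|}$ tournaments with inversion number at most $k$. We compare this with the probability that the random Gram structure within $A$ matches a given one. Because the blocks are large, the within-block Gram entries come from random vectors, and each matrix has probability about $2^{-\Omega(L^2 s)}$. The point of the large blocks is that this beats $2^{k|A|}$ whenever $k\le n-\omega(s)$. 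The $\mathbf{e}_i$-components ensure that the effective rank seen between distinct blocks is about $m$.

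\textbf{Main obstacle.} I expect the main obstacle to be the dimension bookkeeping. The cross condition forces $\langle\mathbf{y}\rangle$ to sit in the orthogonal complement of most of $\langle\mathbf{x}\rangle$, so the naive construction gives each side only about $n/2$. The interleaving/staircase must be exploited so that the span of the $\mathbf{x}$'s and the span of the $\mathbf{y}$'s each have dimension $n-o(n)$, while their mutual Gram matrix is the staircase (which has full rank, so this is not excluded by linear algebra). The idea I would try first is to take $\mathbf{x}$-vectors and $\mathbf{y}$-vectors in the same $(n-s)$-dimensional space, with $\mathbf{y}_j$ a solution of the triangular system $\mathbf{x}_i\cdot\mathbf{y}_j=[\,j\le i\,]$ (solvable since the block representatives $\mathbf{x}_i$ are independent), and to use the remaining $s$ free coordinates for the within-block randomness. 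Making this precise, and making the counting bound survive with only $s=o(n)$ free dimensions, is the step I would expect to need the most care.
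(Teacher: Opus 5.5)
\textbf{There is a genuine gap: the construction you write down gives $T_1$ small inversion number, and your counting bound cannot work.}

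Your framing is correct and matches the paper's. Interleaving the two sides in the target order turns the cross condition into the full-rank staircase $\mathbf{x}_i\cdot\mathbf{y}_j=[\,j\le i\,]$. The idea in your last paragraph (independent $\mathbf{x}_i$, then solve the triangular system for the $\mathbf{y}_j$) is exactly what the paper does. The problems are in what you actually propose:
\begin{itemize}
\item \emph{The construction defeats itself.} You make $E$ totally isotropic, so the $\mathbf{e}_i$ contribute nothing to the Gram matrix of the $\mathbf{x}$-vectors. Every reversal inside $T_1$ then comes from $\mathbf{e}_i\cdot\mathbf{r}'+\mathbf{r}\cdot\mathbf{e}_{i'}+\mathbf{r}\cdot\mathbf{r}'$. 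That is a form on $E+R$ whose $E\times E$ block vanishes, so it has rank at most $2s$, and hence $\text{inv}(T_1)=O(s)$. This is not $n/2$, let alone $n-o(n)$. The $\mathbf{e}_i$ cannot be isotropic and at the same time supply ``effective rank about $m$''.
\item \emph{The within-block probability estimate is false.} Given $R$, a block of $L$ vertices has only $2^{Ls}$ possible vector assignments. So some within-block Gram matrix has probability at least $2^{-Ls}$, not $2^{-\Omega(L^2 s)}$.
\item \emph{The union bound cannot close.} $T_1$ on $N$ vertices is generated from at most $sN+ns$ random bits. Already $2^{k(N-k)}$ labelled tournaments, those given by $X=[I_k\mid Y]$, have inversion number at most $k$. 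So a bound of the form (number of low-inversion tournaments)$\times$(largest single-outcome probability) exceeds $1$ once $k\gg s$.
\item \emph{The blocks are a detour.} The $\mathbf{r}$-parts change the Gram matrix only by rank $O(s)$. Up to $O(s)$, the blocks just reproduce the $m$-vertex problem for $\mathbf{e}_1,\dots,\mathbf{e}_m$.
\item \emph{$T_2$ is not handled.} ``Similarly for $T_2$'' is not an argument. The $\mathbf{y}$'s are forced by the $\mathbf{x}$'s up to $\langle\mathbf{x}\rangle^{\perp}$, so you need a reason why they are still random enough.
\end{itemize}

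The missing idea is that you need no spare coordinates, no blocks and no counting of your own. Put all the randomness into the $\mathbf{x}_i$ themselves; the forced $\mathbf{y}_j$ then turn out to be exactly as random.
\begin{itemize}
\item Take $n$ single vertices on each side, with target order $n+1<1<n+2<2<\cdots$.
\item Let the matrix $A$ with columns $\mathbf{x}_1,\dots,\mathbf{x}_n$ be uniformly random upper unitriangular. Set $B=(A^T)^{-1}L$ with $L=(1_{i\ge j})_{i,j}$, so $A^TB=L$ is your staircase.
\item For $i<j$ one has $\mathbf{x}_i\cdot\mathbf{x}_j=\bigl((A')^{T}(A_{1j},\dots,A_{j-1,j})^{T}\bigr)_i$, where $A'$ is the leading $(j-1)\times(j-1)$ unitriangular block. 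Column by column, $A\mapsto T_1$ is therefore a bijection onto all $2^{\binom{n}{2}}$ tournaments, so $T_1$ is exactly uniform.
\item $A\mapsto B$ is a bijection onto the lower unitriangular matrices, with inverse $B\mapsto (LB^{-1})^{T}$. The same argument with the indices reversed shows $T_2$ is uniform too.
\item The result quoted in the paper, that a uniform random $n$-vertex tournament has $\text{inv}\ge n-\sqrt{2n\log_2 n}$ with high probability, plus a union bound over the two sides, gives $\text{inv}(T_1),\text{inv}(T_2)=n-o(n)$. Meanwhile the $n$ rows of $(A\mid B)$ show $\text{inv}(T_1\to T_2)\le n$.
\end{itemize}
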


Our proof relies on a probabilistic result from the related question of maximising $\text{inv}(T)$ across tournaments $T$ with $n$ vertices: for a uniformly random tournament $T$, $\text{inv}(T)\ge n-\sqrt{2n\log_2 n}$ with probability tending to $1$ as $n\to\infty$ \cite{APS}. We remark that the greatest $\text{inv}(T)$ achieved by a deterministic $T$, for arbitrarily large $n$, is $\lfloor \frac{n-1}{2}\rfloor$ (a transitive tournament with its unique Hamilton path reversed) \cite{Mao}.

\newpage

\begin{proof}[Proof of Theorem \ref{main}]
Suppose that $T$ and $S$ are each tournaments with $n$ vertices. It is easy to show that their inversion number must be less than $n$ \cite{BHS}, but we assume there are column vectors $\mathbf{v}_1,\ldots,\mathbf{v}_n$ and $\mathbf{u}_1,\ldots,\mathbf{u}_n$ encoding $n$ inversions that change $T$ and $S$ into the increasing linear order, respectively. By introducing a dependency between the $\mathbf{v}_i$ and $\mathbf{u}_j$, we shall make these inversions change $T\to S$ into the lexicographic order $n+1<1<n+2<2<\ldots<n$. By the probabilistic method, we can guarantee $\text{inv}(T),\text{inv}(S)=n-o(n)$.

Let the matrix with columns $\mathbf{v}_i$ and $\mathbf{u}_j$ be $A$ and $B$, respectively. The arrow from vertex $i$ of $T$ to vertex $j$ of $S$ is reversed with parity $\mathbf{v}_i\cdot \mathbf{u}_j$, and so if $\mathbf{v}_i\cdot \mathbf{u}_j=1_{i\ge j}$, then $\text{inv}(T\to S)\le n$ in the aforementioned way. Hence, we require
$$A^TB=(1_{i\ge j})_{i,j}=:L.$$

Motivated by this, let $A$ be in upper triangular form, with $A_{ii}=1$ for all $i\in[n]$ and $A_{ij}=0$ or $A_{ij}=1$ for all $i<j$ with equal chance, independently. Since the determinant of $A^T$ is $1$, it is invertible, so we can let $B=(A^T)^{-1}L$.

By applying the inversions encoded by $A$ and $B$ to the increasing linear order, we obtain the dependently random tournaments $T$ and $S$, respectively. However, $T$ and $S$ are uniformly random. Indeed, if $i<n$, switching $A_{in}$ to $1-A_{in}$ reverses the arrows precisely of the pairs $\{j,n\}$ with $A_{ij}=1$. In particular, $\{i,n\}$ is reversed and $\{j,n\}$ is not reversed if $j<i$. Hence, by double induction, all $2^{\binom n2}$ possible tournaments $T$ can arise for some value of $A$, so each of the $2^{\binom n2}$ possible values of $A$ define a unique tournament. As for $S$, the adjoints of $(A^T)^{-1}$ over the diagonal are $0$ and on the diagonal are $1$, which implies that $B$ is lower triangular with $B_{ii}=1$ for all $i\in [n]$, but $A=(LB^{-1})^T$, so $B\sim A$.

Since $T$ is uniformly random, $\text{inv}(T)<n-O(\sqrt{n\log n})$ with probability less than $\frac12$, and similarly for $S$ \cite{APS}. By the union bound, an instance of $T$ and $S$ exists for which $\text{inv}(T),\text{inv}(S)\ge n-O(\sqrt{n\log n})$, whilst $\text{inv}(T\to S)\le n$. \end{proof}

This proof was found using an LLM and communicated by the self-funded author.

\textsc{Independent author in Cambridge, United Kingdom.} \\ \\
\textit{E-mail address:} \texttt{kkw25@cantab.ac.uk}

\end{document}